\begin{document}
\tikzset{middlearrow/.style={
        decoration={markings,
            mark= at position 0.7 with {\arrow[scale=2]{#1}} ,
        },
        postaction={decorate}
    }
}
\newtheorem{theorem}{Theorem}
\newtheorem{lemma}[theorem]{Lemma}
\newtheorem{corollary}[theorem]{Corollary}
\newtheorem{proposition}[theorem]{Proposition}
\theoremstyle{remark}
\newtheorem*{remark}{Remark}
\newcommand{\TODO}[1]{\textcolor{red}{\textbf{TO DO:} {\small #1}}}
\newcommand{\NB}[1]{\textcolor{blue}{\textbf{N.B.} {\small #1}}}

\title{A revised Moore bound for mixed graphs}
\author{
	Dominique Buset\thanks{Universit\'e Libre de Bruxelles, Belgium}
	\and Mourad El Amiri\footnotemark[1]
	\and Grahame Erskine\thanks{Open University, Milton Keynes, UK}
	\and Mirka Miller\thanks{University of Newcastle, NSW, Australia}
	\and Hebert P\'erez-Ros\'es\thanks{University of Lleida, Spain}
}
\date{}
\maketitle
\let\thefootnote\relax\footnote{Mathematics subject classification: 05C35}
\let\thefootnote\relax\footnote{Keywords: degree-diameter problem, mixed graphs, Moore bound}
\begin{abstract}
The degree-diameter problem seeks to find the maximum possible order of a graph with a given (maximum) degree and diameter.
It is known that graphs attaining the maximum possible value (the \emph{Moore bound}) are extremely rare, but much activity is focussed on 
finding new examples of graphs or families of graph with orders approaching the bound as closely as possible.

There has been recent interest in this problem as it applies to \emph{mixed} graphs, in which we allow
some of the edges to be undirected and some directed. A 2008 paper of Nguyen and Miller derived an upper bound on the
possible number of vertices of such graphs. We show that for diameters larger than three, this bound can be reduced and we present 
a corrected Moore bound for mixed graphs, valid for all diameters and for all combinations of undirected and directed degrees.
\end{abstract}
\section{Introduction}
The Degree Diameter Problem for graphs has its motivation in the efficient design of interconnection networks. We seek to find the largest possible
order (number of vertices) of a graph given constraints on its diameter $k$ and the maximum degree $d$ of any of its vertices. Often the problem is studied
for undirected graphs, in which case the well-known \emph{Moore bound} (see e.g. \cite{miller2005moore}) states that the number of vertices cannot exceed
\begin{equation}\label{eq:undir}
M_{d,k}=
\begin{cases}
\displaystyle 1+d\frac{(d-1)^k-1}{d-2}&\text{ if }d>2\\
2k+1&\text{ if }d=2
\end{cases}
\end{equation}
In the undirected case it is well known \cite{miller2005moore} that no graph can achieve this bound if $k>2$, and for diameter 2 the only known \emph{Moore graphs}
correspond to degrees 2, 3 and 7, with the case $d=57$ being a famous open problem.

In the case of directed graphs the corresponding Moore bound \cite{miller2005moore} is given by 
\begin{equation}\label{eq:dir}
M_{d,k}=
\begin{cases}
\displaystyle \frac{d^{k+1}-1}{d-1}&\text{ if }d>1\\
k+1&\text{ if }d=1
\end{cases}
\end{equation}

In this paper we are concerned with the \emph{mixed} or \emph{partially directed} problem, 
in which we allow some of the edges in our graph to be directed and some undirected.
We conform to the most usual notation in the literature, 
so that the maximum undirected degree of a vertex (the number of undirected edges incident to it) is denoted by $r$.
The maximum directed degree is taken to mean the maximum number of out-arcs from any vertex and is denoted by $z$.
As usual we denote the diameter of a graph by $k$.

To bound the maximum possible number of vertices, the approach is to consider a spanning tree rooted at some arbitrary vertex.
It is not difficult to see that maximality is only achieved when each vertex has a unique parent at the previous level in the tree,
and the maximum possible number of neighbours at the next level. Figure \ref{fig:tree} shows such a tree for the case $z=3,r=3,k=2$.

Note that this Moore bound is only attained in a very small number of known cases.
Nguyen, Miller and Gimbert \cite{Nguyen2007} show that no graphs attaining the bound exist if the diameter $k\geq 3$.
For $k=2$, the known examples \cite{miller2005moore} are a family of Kautz graphs with $r=1,z\geq 1$ and a graph of Bos\'ak with $r=3,z=1$.
Recently, J\o rgensen \cite{Joergensen2015} has discovered a pair of graphs with $r=3,z=7$. 

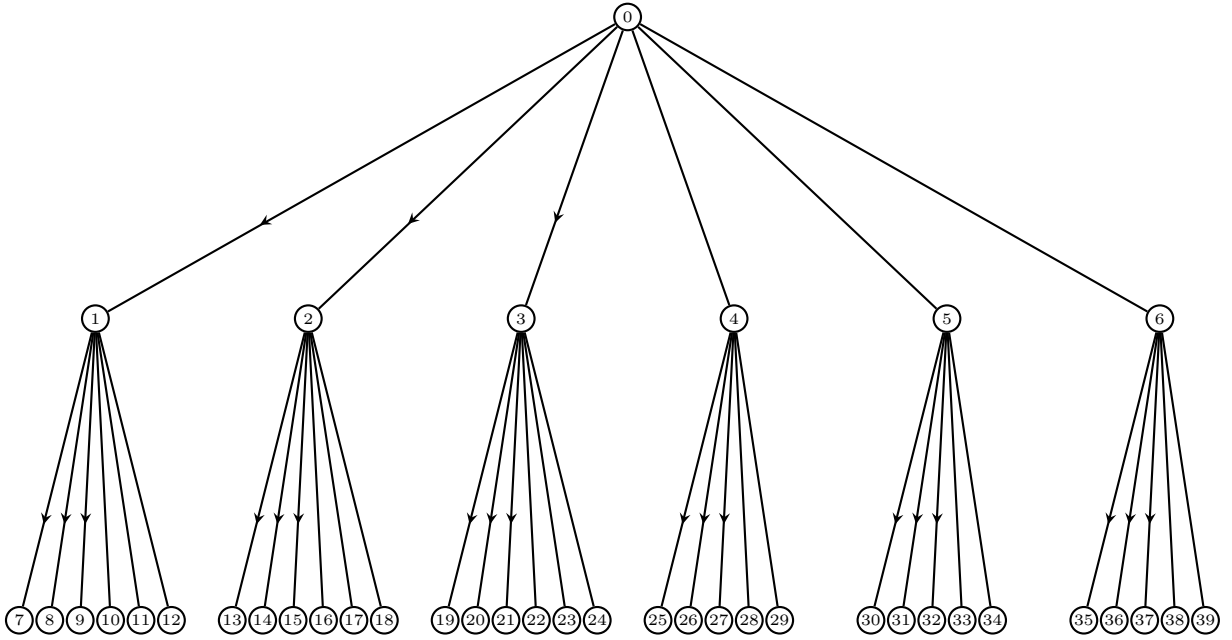
\begin{figure}
\centering
\begin{tikzpicture}[middlearrow=stealth,x=0.2mm,y=-0.2mm,inner sep=0.1mm,scale=2,
	thick,vertex/.style={circle,draw,minimum size=10,font=\tiny,fill=white},edge label/.style={fill=white}]
	\tiny
	\node at (200,0) [vertex] (v0) {$0$};
	\node at (25,100) [vertex] (v1) {$1$};
	\node at (95,100) [vertex] (v2) {$2$};
	\node at (165,100) [vertex] (v3) {$3$};
	\node at (235,100) [vertex] (v4) {$4$};
	\node at (305,100) [vertex] (v5) {$5$};
	\node at (375,100) [vertex] (v6) {$6$};
	\node at (0,200) [vertex] (v11) {$7$};
	\node at (10,200) [vertex] (v12) {$8$};
	\node at (20,200) [vertex] (v13) {$9$};
	\node at (30,200) [vertex] (v14) {$10$};
	\node at (40,200) [vertex] (v15) {$11$};
	\node at (50,200) [vertex] (v16) {$12$};
	\node at (70,200) [vertex] (v21) {$13$};
	\node at (80,200) [vertex] (v22) {$14$};
	\node at (90,200) [vertex] (v23) {$15$};
	\node at (100,200) [vertex] (v24) {$16$};
	\node at (110,200) [vertex] (v25) {$17$};
	\node at (120,200) [vertex] (v26) {$18$};
	\node at (140,200) [vertex] (v31) {$19$};
	\node at (150,200) [vertex] (v32) {$20$};
	\node at (160,200) [vertex] (v33) {$21$};
	\node at (170,200) [vertex] (v34) {$22$};
	\node at (180,200) [vertex] (v35) {$23$};
	\node at (190,200) [vertex] (v36) {$24$};
	\node at (210,200) [vertex] (v41) {$25$};
	\node at (220,200) [vertex] (v42) {$26$};
	\node at (230,200) [vertex] (v43) {$27$};
	\node at (240,200) [vertex] (v44) {$28$};
	\node at (250,200) [vertex] (v45) {$29$};
	\node at (280,200) [vertex] (v51) {$30$};
	\node at (290,200) [vertex] (v52) {$31$};
	\node at (300,200) [vertex] (v53) {$32$};
	\node at (310,200) [vertex] (v54) {$33$};
	\node at (320,200) [vertex] (v55) {$34$};
	\node at (350,200) [vertex] (v61) {$35$};
	\node at (360,200) [vertex] (v62) {$36$};
	\node at (370,200) [vertex] (v63) {$37$};
	\node at (380,200) [vertex] (v64) {$38$};
	\node at (390,200) [vertex] (v65) {$39$};

	\path
		(v0) edge [middlearrow] (v1)
		(v0) edge [middlearrow] (v2)
		(v0) edge [middlearrow] (v3)
		(v0) edge (v4)
		(v0) edge (v5)
		(v0) edge (v6)
		(v1) edge [middlearrow] (v11)
		(v1) edge [middlearrow] (v12)
		(v1) edge [middlearrow] (v13)
		(v1) edge (v14)
		(v1) edge (v15)
		(v1) edge (v16)
		(v2) edge [middlearrow] (v21)
		(v2) edge [middlearrow] (v22)
		(v2) edge [middlearrow] (v23)
		(v2) edge (v24)
		(v2) edge (v25)
		(v2) edge (v26)
		(v3) edge [middlearrow] (v31)
		(v3) edge [middlearrow] (v32)
		(v3) edge [middlearrow] (v33)
		(v3) edge (v34)
		(v3) edge (v35)
		(v3) edge (v36)
		(v4) edge [middlearrow] (v41)
		(v4) edge [middlearrow] (v42)
		(v4) edge [middlearrow] (v43)
		(v4) edge (v44)
		(v4) edge (v45)
		(v5) edge [middlearrow] (v51)
		(v5) edge [middlearrow] (v52)
		(v5) edge [middlearrow] (v53)
		(v5) edge (v54)
		(v5) edge (v55)
		(v6) edge [middlearrow] (v61)
		(v6) edge [middlearrow] (v62)
		(v6) edge [middlearrow] (v63)
		(v6) edge (v64)
		(v6) edge (v65)
		;
\end{tikzpicture}
\label{fig:tree}
\caption{The labelled Moore tree for $z=3,r=3,k=2$}
\end{figure}

In \cite{Nguyen2008} the general upper bound for the order of a graph with parameters $z,r,k$ is given as
\begin{equation}\label{eq:old}
M_{z,r,k}=1\,+\,(z+r)\,+\,z(z+r)+r(z+r-1)\,+\,\ldots\,+\,z(z+r)^{k-1}+r(z+r-1)^{k-1}
\end{equation}
It seems that this formula may have been extrapolated from the expressions for graphs of small diameter.
In this paper we develop a precise formula for the Moore bound, 
and show that for all diameters greater than 3 this is in fact smaller than the bound stated in \cite{Nguyen2008}.
\section{Revised Moore Bound}
\begin{theorem}\label{thm:moore}
Let $M_{z,r,k}$ denote the largest possible number of vertices in a mixed graph of diameter $k$, maximum directed degree $z$ and maximum undirected degree $r$.
Then:
\begin{equation}\label{eq:new}
M_{z,r,k}=A\frac{u_1^{k+1}-1}{u_1-1}+B\frac{u_2^{k+1}-1}{u_2-1}
\end{equation}
where:
\begin{align*}
v&=(z+r)^2+2(z-r)+1\\
u_1&=\frac{z+r-1-\sqrt{v}}{2}\\
u_2&=\frac{z+r-1+\sqrt{v}}{2}\\
A&=\frac{\sqrt{v}-(z+r+1)}{2\sqrt{v}}\\
B&=\frac{\sqrt{v}+(z+r+1)}{2\sqrt{v}}
\end{align*}
\end{theorem}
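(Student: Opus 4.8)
The plan is to count the vertices of the Moore tree directly by classifying the vertices at each level according to how they were reached from their parent. The key observation is that a vertex reached by a directed arc behaves differently from one reached by an undirected edge: in the former case all $r$ undirected edges and all $z$ out-arcs of the vertex lead to fresh vertices at the next level, whereas in the latter case one undirected edge is ``used up'' returning to the parent, leaving only $r-1$ undirected edges available. So first I would introduce, for each level $i$, the quantities $a_i$ and $b_i$ counting respectively the vertices reached by an arc and by an undirected edge, and record the recurrence
\begin{align*}
a_{i+1}&=z\,a_i+z\,b_i,\\
b_{i+1}&=r\,a_i+(r-1)\,b_i,
\end{align*}
treating the root as a single ``arc-type'' vertex, i.e. $(a_0,b_0)=(1,0)$, since it has no parent and hence full degree.

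Writing $\vec{v}_i=(a_i,b_i)^{\mathsf T}$, the recurrence becomes $\vec{v}_{i+1}=T\vec{v}_i$ with $T=\left(\begin{smallmatrix}z&z\\r&r-1\end{smallmatrix}\right)$, so that the number of vertices at level $i$ is $n_i=(1,1)\,T^i\,(1,0)^{\mathsf T}$ and the total order is $\sum_{i=0}^{k}n_i$. The next step is to diagonalise $T$. Its characteristic polynomial is $\lambda^2-(z+r-1)\lambda-z$, whose discriminant is $v=(z+r-1)^2+4z$; a short expansion confirms this equals the expression $(z+r)^2+2(z-r)+1$ in the statement, and its two roots are precisely $u_1$ and $u_2$. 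Consequently $n_i=A\,u_1^i+B\,u_2^i$ for constants $A,B$ independent of $i$.

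Rather than computing eigenvectors explicitly, I would fix $A$ and $B$ by matching the two initial values $n_0=1$ and $n_1=z+r$, giving the linear system $A+B=1$ and $Au_1+Bu_2=z+r$. Solving this, and using $u_2-u_1=\sqrt{v}$ together with the explicit forms of $u_1,u_2$ (and, if convenient, the relation $u_1u_2=-z$), reproduces exactly the constants $A$ and $B$ stated in the theorem. Finally, summing the two geometric series $\sum_{i=0}^{k}u_j^i=\frac{u_j^{k+1}-1}{u_j-1}$ yields the closed form \eqref{eq:new}.

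The main obstacle is conceptual rather than computational: justifying rigorously that this tree count really is an upper bound on the order, i.e. that maximality forces every vertex to have a unique parent and the full complement of forward neighbours, with no repeated vertices before level $k$. This is the standard Moore-tree argument, but care is needed in the mixed setting to treat the two arrival types consistently and to handle the degenerate parameter cases (for instance $r=0$, where the directed bound \eqref{eq:dir} must be recovered, or values making $v$ a perfect square or $u_1=1$) so that the formula remains valid across all combinations of $z$ and $r$.
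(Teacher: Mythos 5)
Your proposal is correct and follows essentially the same route as the paper: your two-type system for $(a_i,b_i)$ collapses (via $a_i=zn_{i-1}$) to exactly the paper's Lemma~\ref{lemma:distpart} recurrence $L_j=(z+r-1)L_{j-1}+zL_{j-2}$, after which both arguments solve the same characteristic equation, fit $A,B$ to $L_0=1,L_1=z+r$, and sum the geometric series. The only difference is presentational (a transfer matrix versus a directly stated second-order recurrence), and your closing remarks about the Moore-tree justification and degenerate cases such as $u_2=1$ match caveats the paper itself only treats implicitly.
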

To prove the formula, we count vertices in the spanning tree by fixing an arbitrary vertex $w$ and consider the distance partition from $w$.
Denote by $L_j$ the maximum possible number of vertices in the graph at distance $j$ from $w$.
\begin{lemma}\label{lemma:distpart}
The maximum number of vertices in the distance partition satisfies the recurrence
\[L_j=(z+r-1)L_{j-1}+zL_{j-2};\quad L_0=1;\quad L_1=z+r\]
\end{lemma}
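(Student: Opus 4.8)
The plan is to refine the distance partition by recording, for each vertex at distance $j$, whether the edge joining it to its unique parent at distance $j-1$ is undirected or directed. Write $u_j$ for the number of vertices at distance $j$ reached from their parent by an undirected edge, and $d_j$ for the number reached by a directed arc, so that $L_j = u_j + d_j$. At the root we have a single vertex equipped with $r$ undirected edges and $z$ out-arcs, giving $u_1 = r$, $d_1 = z$, and hence $L_1 = z+r$ as claimed; the level $L_0 = 1$ is immediate.

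The key step is to count, under the maximality assumption, how many children of each type a vertex at distance $j-1$ can support, and here the essential asymmetry between edges and arcs enters. A vertex reached by an \emph{undirected} edge has already spent one of its $r$ undirected edges on the connection back to its parent, so it contributes at most $r-1$ new undirected children together with all $z$ of its out-arcs. By contrast, a vertex reached by an \emph{arc} receives that arc as an \emph{in}-arc, which does not diminish its out-degree; it can therefore contribute all $r$ undirected children and all $z$ directed children. This gives the coupled recurrences
\begin{align*}
u_j &= (r-1)\,u_{j-1} + r\,d_{j-1}\\
d_j &= z\,u_{j-1} + z\,d_{j-1} = z\,L_{j-1}.
\end{align*}

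To finish I would eliminate the auxiliary sequences. The second relation gives $d_{j-1} = z L_{j-2}$ for $j \geq 2$, whence $u_{j-1} = L_{j-1} - d_{j-1} = L_{j-1} - z L_{j-2}$. Substituting both into $L_j = u_j + d_j = \bigl[(r-1)u_{j-1} + r\,d_{j-1}\bigr] + z L_{j-1}$ and collecting terms yields
\[L_j = (z+r-1)L_{j-1} + z L_{j-2},\]
exactly the stated recurrence. A quick check against Figure~\ref{fig:tree} (with $z=r=3$, $k=2$) gives $L_2 = 5\cdot 6 + 3 = 33$ and total order $1 + 6 + 33 = 40$, matching the labelling $0,\dots,39$.

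I expect the only genuine obstacle to be the bookkeeping of the degree constraints: namely, making rigorous the claim that maximality forces \emph{every} available edge and out-arc of a vertex (apart from the single undirected edge back to its parent) to reach a previously uncounted vertex, so that no edge or arc is wasted pointing back toward earlier levels. This is precisely the standard tree-maximality argument indicated before the lemma, now applied separately to the undirected and the directed out-neighbourhoods, and it is what legitimises treating the incoming arc of a directed child as costing nothing in out-degree.
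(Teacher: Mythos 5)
Your proof is correct and takes essentially the same route as the paper's: both rest on the observation that a vertex reached by a directed arc keeps all $z+r$ potential children while one reached by an undirected edge has only $z+r-1$, and that the arc-reached vertices at level $j-1$ number $zL_{j-2}$. Your explicit decomposition into the sequences $u_j$ and $d_j$ followed by elimination is just a more formal bookkeeping of the paper's one-line count, and your numerical check against Figure~\ref{fig:tree} is consistent.
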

\begin{proof}
Clearly $L_0=1,L_1=z+r$. For $j\geq 2$ we proceed inductively. 
The key observation is that in a maximal graph, a vertex at level $j-1$ has exactly one parent at level $j-2$, 
but the number of its children at level $j$ depends on whether the edge from its parent is undirected or directed. 
If the edge is undirected then the vertex has at most $z+r-1$ children, and if it was directed then the vertex
has at most $z+r$ children, i.e. one more. Since the number of vertices at level $j-1$ with a directed edge from their parent is at most $zL_{j-2}$,
the recurrence follows.
\end{proof}
\begin{proof}[Proof of Theorem \ref{thm:moore}]
Clearly $\displaystyle M_{z,r,k}=\sum_{j=0}^k L_j$. 
To find an explicit form for $L_j$ we solve the second-order homogeneous recurrence defined by Lemma \ref{lemma:distpart}.
The characteristic equation of the recurrence system is $u^2+(1-z-r)u-z=0$. This has roots $u_1$ and $u_2$ as defined in the theorem.
Since $v=(z+r-1)^2+4z$, in all non-degenerate cases $v>0$ and so the roots are real and distinct.
From the general theory of second-order recurrences, the general solution of the system is $L_j=A u_1^j+B u_2^j$ where $A,B$ are constants
defined by the initial conditions. Elementary algebraic manipulation gives the values of $A,B$ as defined in the theorem.

Since $\displaystyle M_{z,r,k}=\sum_{j=0}^k L_j$, summing the geometric series for the $L_j$ gives the result
\[M_{z,r,k}=A\frac{u_1^{k+1}-1}{u_1-1}+B\frac{u_2^{k+1}-1}{u_2-1}\]

\end{proof}

Although the closed form solution \eqref{eq:new} looks rather more complex than the old version, 
we can show via straightforward algebraic manipulation that it generalises both the undirected and directed formulae.
\begin{proposition}~\par
\begin{enumerate}[(a)]
\item Setting $z=0$ in Equation \eqref{eq:new} recovers the undirected Moore bound \eqref{eq:undir}.
\item Setting $r=0$ in Equation \eqref{eq:new} recovers the directed Moore bound \eqref{eq:dir}.
\end{enumerate}
\end{proposition}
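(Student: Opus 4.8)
The plan is to treat each part by direct substitution into the closed form \eqref{eq:new}, exploiting the fact that the discriminant $v$ becomes a perfect square in both special cases, so that $\sqrt{v}$ simplifies and the roots $u_1,u_2$ together with the constants $A,B$ take particularly simple values. Once these simplifications are made, each claim reduces to an elementary algebraic identity.

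For part (a), I would set $z=0$ and first observe that $v=(z+r)^2+2(z-r)+1$ collapses to $r^2-2r+1=(r-1)^2$, so $\sqrt{v}=r-1$. Feeding this into the definitions gives $u_1=0$, $u_2=r-1$, $A=-1/(r-1)$ and $B=r/(r-1)$. The key simplification is that the $u_1$-term degenerates gracefully: since $u_1=0$ we have $\frac{u_1^{k+1}-1}{u_1-1}=1$ for every $k\geq 0$, so the entire first summand is just the constant $A$. What remains is to add $A$ to $B\frac{(r-1)^{k+1}-1}{(r-1)-1}$, place the two fractions over the common denominator $(r-1)(r-2)$, and factor $(r-1)$ out of the numerator. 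After this factorisation the $(r-1)$ cancels and one is left with $\frac{r(r-1)^k-2}{r-2}$, which is exactly $1+r\frac{(r-1)^k-1}{r-2}$, i.e. the undirected bound \eqref{eq:undir} with $d=r$.

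For part (b), I would set $r=0$ and note that now $v=z^2+2z+1=(z+1)^2$, so $\sqrt{v}=z+1$. The definitions then yield $u_1=-1$, $u_2=z$, and, most conveniently, $A=0$ and $B=1$. Because $A$ vanishes, the whole first summand disappears (no separate argument for $u_1=-1$ is even needed), and the formula reduces immediately to $\frac{u_2^{k+1}-1}{u_2-1}=\frac{z^{k+1}-1}{z-1}$, which is precisely the directed bound \eqref{eq:dir} with $d=z$.

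The only genuinely delicate point, and the step I expect to require the most care, is the boundary behaviour when a root equals $1$, since then the geometric-sum form $\frac{u^{k+1}-1}{u-1}$ is the indeterminate $0/0$. This occurs exactly in the smallest-degree regimes covered by the second line of each Moore bound: when $r=2$ in part (a) we have $u_2=r-1=1$, and when $z=1$ in part (b) we have $u_2=z=1$. In each case I would replace the closed form by its limiting value $\lim_{u\to 1}\frac{u^{k+1}-1}{u-1}=k+1$ (equivalently, sum the constant series for $L_j$ directly). Doing so gives $M_{0,2,k}=A+B(k+1)=-1+2(k+1)=2k+1$ and $M_{1,0,k}=B(k+1)=k+1$, recovering the respective $d=2$ and $d=1$ cases. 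Since all the intermediate steps are routine algebra, the essential content is verifying the perfect-square simplification of $v$ and then disposing of these two degenerate endpoints cleanly.
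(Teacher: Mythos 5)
Your computation is correct and is precisely the ``straightforward algebraic manipulation'' the paper alludes to without writing out: substituting $z=0$ or $r=0$ makes $v$ a perfect square, the roots and coefficients collapse as you state, and the resulting expressions match \eqref{eq:undir} and \eqref{eq:dir}, with your treatment of the $u_2=1$ endpoints ($r=2$ and $z=1$) correctly recovering the $2k+1$ and $k+1$ cases. No gap; this matches the paper's intended argument.
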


At first glance the formula \eqref{eq:new} offers little insight into the behaviour of the bound as $k$ increases.
However we can obtain a relatively straightforward estimate of its asymptotic behaviour.
\begin{proposition}\label{prop:nearest}
Suppose $r>0$. 
In the notation of Theorem \ref{thm:moore}, 
for sufficiently large $k$, $M_{z,r,k}$ is the nearest integer to $\displaystyle B\frac{u_2^{k+1}-1}{u_2-1}-\frac{A}{u_1-1}$.
\end{proposition}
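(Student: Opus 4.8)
The plan is to compute the exact discrepancy between $M_{z,r,k}$ and the proposed value and to show it becomes negligible. Write $C_k=B\frac{u_2^{k+1}-1}{u_2-1}-\frac{A}{u_1-1}$ for the candidate expression. Substituting the closed form \eqref{eq:new} from Theorem \ref{thm:moore}, the two terms involving $u_2$ cancel, and the constant $-\frac{A}{u_1-1}$ is exactly what is needed to absorb the $-1$ in the numerator of the $u_1$-term, leaving
\[M_{z,r,k}-C_k=A\,\frac{u_1^{k+1}-1}{u_1-1}+\frac{A}{u_1-1}=\frac{A\,u_1^{\,k+1}}{u_1-1}.\]
Thus the proposition reduces to a single claim: that this quantity has absolute value strictly less than $\tfrac12$ once $k$ is large. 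Since $M_{z,r,k}$ is an integer, any integer lying within distance less than $\tfrac12$ of $C_k$ is automatically the unique nearest integer to $C_k$, so this suffices.

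Everything then hinges on the size of $u_1$, because $A$ and $u_1-1$ are fixed nonzero constants. (The denominator is safe: $u_1=1$ would force $r=2-2z\le 0$, contradicting $r>0$.) The key step, and the only one carrying real content, is to prove $|u_1|<1$, and this is precisely where the hypothesis $r>0$ enters. For $z=0$ one checks directly that $u_1=0$, so the discrepancy is identically zero. For $z>0$ the relation $u_1u_2=-z$ forces $u_1<0<u_2$, whence $|u_1|=\tfrac12\bigl(\sqrt v-(z+r-1)\bigr)$, and the inequality $|u_1|<1$ is equivalent to $\sqrt v<z+r+1$, i.e. to $v<(z+r+1)^2$. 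A one-line computation yields the clean identity
\[(z+r+1)^2-v=4r,\]
so $v<(z+r+1)^2$ holds exactly because $r>0$, giving $|u_1|<1$.

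With $|u_1|<1$ in hand the conclusion is immediate: $|u_1|^{k+1}\to 0$, so
\[\left|\frac{A\,u_1^{\,k+1}}{u_1-1}\right|=\frac{|A|}{|u_1-1|}\,|u_1|^{k+1}\longrightarrow 0\quad(k\to\infty),\]
and there is a threshold beyond which this is below $\tfrac12$, which is all that was required. I expect the identity $(z+r+1)^2-v=4r$ to be the crux: it pins down $|u_1|<1$ and simultaneously explains why the argument degenerates at $r=0$, where $|u_1|=1$ and the naive bound no longer decays (there the claim survives only because $A=0$ as well, which is why the proposition restricts to $r>0$ for a uniform statement). Everything else is routine geometric-series and nearest-integer bookkeeping.
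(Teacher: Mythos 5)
Your proposal is correct and follows essentially the same route as the paper: both arguments reduce the claim to showing $|u_1|<1$, and both ultimately rest on the inequality $v<(z+r+1)^2$, i.e.\ your identity $(z+r+1)^2-v=4r>0$. If anything your write-up is more complete --- you explicitly compute the discrepancy $M_{z,r,k}-C_k=\frac{A\,u_1^{k+1}}{u_1-1}$, which the paper compresses into ``it suffices to show $|u_1|<1$'', and you replace the paper's partial-derivative monotonicity argument for $-1<u_1\le 0$ with the cleaner observations $u_1u_2=-z$ and $(z+r+1)^2-v=4r$.
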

\begin{proof}
It suffices to show that $|u_1|<1$. 

Now $\displaystyle 2\frac{\partial u_1}{\partial z}=1-\frac{z+r+1}{\sqrt{(z+r)^2+2(z-r)+1}}$. Since $r>0$, it follows that $(z+r+1)^2>(z+r)^2-2(z+r)+1$
and so $\displaystyle\frac{\partial u_1}{\partial z}<0$. So for any fixed $r>0$, $u_1$ is strictly decreasing as $z$ increases. 
When $z=0,u_1=0$ and for any $z$ we have $u_1>-1$ since $z+r+1-\sqrt{(z+r)^2+2(z-r)+1}>0$. Thus $0\geq u_1>-1$ for any $r>0$ and any $z\geq 0$.
\end{proof}

\section{Impact of revised bound on existing work}
We begin by offering some brief comparison between the old formula \eqref{eq:old} and our corrected version \eqref{eq:new}.
It is easy to verify that the formulae agree up to diameter 3, but our formula gives smaller results for all diameters $k>3$.
A comparison of values given by the old and new formulae for small $z,r$ is shown in the logarithmic plots in Figure~\ref{fig:tabs}.

\begin{figure}[h]

\begin{tikzpicture}[scale=0.9]
\begin{semilogyaxis}[
		every axis legend/.append style={nodes={right}},
    title={$z=1,r=1$},
    xlabel={Diameter},
    ylabel={Bound},
    legend pos=north west,
    ymajorgrids=true,
    grid style=dashed,
]
 
\addplot[mark=o]
    coordinates {
			(1,3)
			(2,6)
			(3,11)
			(4,20)
			(5,37)
			(6,70)
			(7,135)
			(8,264)
			(9,521)
			(10,1034)
			(11,2059)
			(12,4108)
			(13,8205)
			(14,16398)
			(15,32783)
			(16,65552)
			(17,131089)
			(18,262162)
			(19,524307)
			(20,1048596)
    };
 
\addplot[mark=x]
    coordinates {
			(1,3)
			(2,6)
			(3,11)
			(4,19)
			(5,32)
			(6,53)
			(7,87)
			(8,142)
			(9,231)
			(10,375)
			(11,608)
			(12,985)
			(13,1595)
			(14,2582)
			(15,4179)
			(16,6763)
			(17,10944)
			(18,17709)
			(19,28655)
			(20,46366)
    };
    \legend{Old formula,Corrected formula}
 
\end{semilogyaxis}
\end{tikzpicture}
\begin{tikzpicture}[scale=0.9]
\begin{semilogyaxis}[
		every axis legend/.append style={nodes={right}},
    title={$z=2,r=1$},
    xlabel={Diameter},
    ylabel={Bound},
    legend pos=north west,
    ymajorgrids=true,
    grid style=dashed,
]
 
\addplot[mark=o]
    coordinates {
			(1,4)
			(2,12)
			(3,34)
			(4,96)
			(5,274)
			(6,792)
			(7,2314)
			(8,6816)
			(9,20194)
			(10,60072)
			(11,179194)
			(12,535536)
			(13,1602514)
			(14,4799352)
			(15,14381674)
			(16,43112256)
			(17,129271234)
			(18,387682632)
			(19,1162785754)
			(20,3487832976)
    };
 
\addplot[mark=x]
    coordinates {
			(1,4)
			(2,12)
			(3,34)
			(4,94)
			(5,258)
			(6,706)
			(7,1930)
			(8,5274)
			(9,14410)
			(10,39370)
			(11,107562)
			(12,293866)
			(13,802858)
			(14,2193450)
			(15,5992618)
			(16,16372138)
			(17,44729514)
			(18,122203306)
			(19,333865642)
			(20,912137898)
    };
    \legend{Old formula,Corrected formula}
 
\end{semilogyaxis}
\end{tikzpicture}
\caption{Comparison of old Moore bound formula with corrected version}
\label{fig:tabs}
\end{figure}
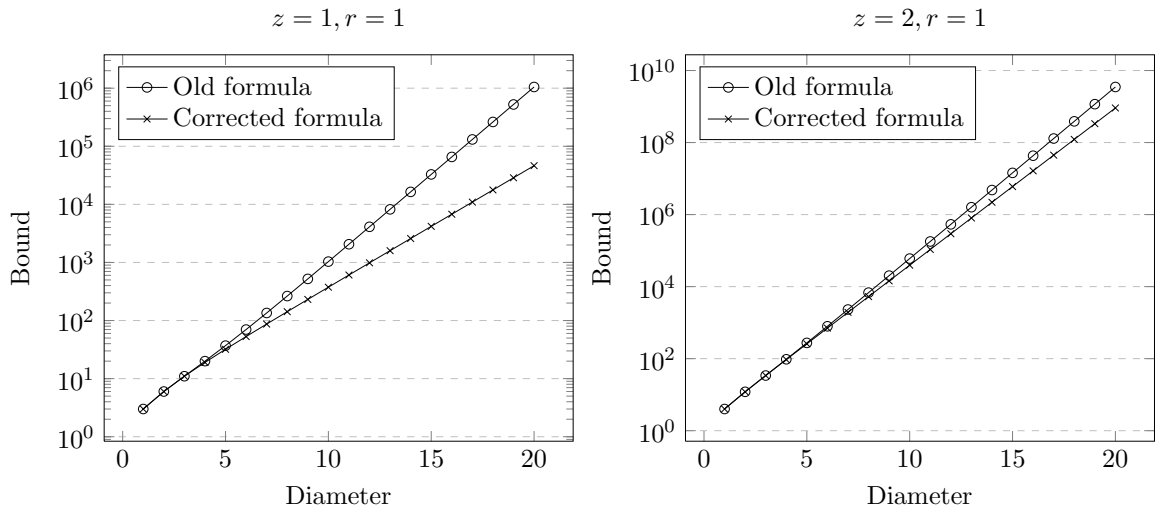

The old bound has been quoted by several authors, but none of them has used it in an essential way. 
For instance, \cite{Nguyen2007} proves that Moore graphs only exist for diameter 2. 
Recall that for diameter 2 the new upper bound agrees with the old one. 
The proof of the non-existence of Moore graphs for diameters greater than 2 does not make use of the upper bound, and hence is not affected by our correction. 
The papers \cite{Nacho2014} and \cite{Nacho2015} also focus on the case of diameter 2. 
In \cite{Nacho2014}, some conditions are given to construct a Moore graph by voltage assignment. 
On the other hand, \cite{Nacho2015} investigates properties of Moore graphs of directed degree one. Those results are also unaffected by our correction.  
\section{Acknowledgements}
The discussions leading to this paper arose during the workshop GraphMasters 12 hosted at the University of West Bohemia, Pilsen, Czech Republic in November 2014.
The remaining authors wish to dedicate this paper to the memory of our good friend and colleague Prof. Mirka Miller.
\bibliographystyle{amsplain}
\bibliography{MixedBound}
\end{document}